\newtheorem{thm}{Theorem}[section]
\newtheorem{lema}[thm]{Lemma}
\theoremstyle{definition}
\newtheorem{defn}[thm]{Definition}
\theoremstyle{remark}
\newtheorem{rem}[thm]{Remark}
\numberwithin{equation}{section}
\newcommand{\R}{\mathbb R}
\newcommand{\N}{\mathbb N}
\newcommand{\Jp}{\mathcal{J}_{1,p}}
\newcommand{\I}{\mathcal{I}}
\newcommand{\Jsp}{\mathcal{J}_{s,p}}
\newcommand{\Jk}{\mathcal{J}_{s_k,p}}
\newcommand{\Np}{\mathcal{N}_{1,p}}
\newcommand{\Nsp}{\mathcal{N}_{s,p}}
\def\glim{\mathop{\text{\normalfont $\Gamma-$lim}}}
 \newcommand{\ve}{\varepsilon}
\newcommand{\cd}{\rightharpoonup}
\begin{document}

\title{Stability of solutions for nonlocal problems}
	
\author[J. Fern\'andez Bonder]{Julian Fern\'andez Bonder}
\author[A. Salort]{Ariel Salort}

\address{Departamento de Matem\'atica, FCEyN - Universidad de Buenos Aires and
\hfill\break \indent IMAS - CONICET
\hfill\break \indent Ciudad Universitaria, Pabell\'on I (1428) Av. Cantilo s/n. \hfill\break \indent Buenos Aires, Argentina.}

\email[J. Fern\'andez Bonder]{jfbonder@dm.uba.ar}
\urladdr{http://mate.dm.uba.ar/~jfbonder}

\email[A. Salort]{asalort@dm.uba.ar}
\urladdr{http://mate.dm.uba.ar/~asalort}

\subjclass[2010]{35R11, 35B35, 45G05}


\begin{abstract}
In this paper we deal with the stability of solutions to fractional $p-$Laplace problems with nonlinear sources when the fractional parameter $s$ goes to 1. We prove a general convergence result for general weak solutions which is applied to study the convergence of ground state solutions of $p-$fractional problems in bounded and unbounded domains as $s$ goes to 1. Moreover, our result applies to treat the stability of $p-$fractional eigenvalues as $s$ goes to 1.
\end{abstract}

\maketitle

\section{Introduction}

In this paper we analyze the stability of solutions for fractional $p-$laplace equations when the fractional parameter goes to 1. This is the transition from nonlocal-to-local equations. This phenomena has been studied by several authors in the past, but as far as we are concerned, the problem for general solutions of nonlinear equations is, prior to this work, missing in the literature.

Our results rely deeply in the seminal papers of Bourgain-Brezis-Mironescu \cite{BBM,BBM2}, where the authors study the behavior of $p-$fractional energies as $s\uparrow 1$ (see also \cite{Ponce}).

In this line of research, in \cite{BPS} the authors dealt with the asymptotic behavior of eigenfunctions of the Dirichlet fractional $p-$Laplacian, i.e., a right hand side being a multiple of a $p-$power. Also, for the linear setting, in \cite{BHS} the authors studied the behavior as $s\uparrow 1$ of solutions of  the Poisson equation to its local counterpart. See also \cite{FBS} for a similar result in the context of fractional Sobolev-Orlicz spaces. Moreover,  in \cite{BS} the same task was done for ground state solutions of the fractional semilinear Schr\"odinger equation. Nevertheless, in the quasilinear case several technical difficulties arise and up to our knowledge this situation was not contemplated with general right hand side and that is the main aim of this manuscript.  

To be precise, we analyze the asymptotic behavior as $s\uparrow 1$ of any family of solutions of the problem
\begin{align} \label{eq.s}
\begin{cases}
(-\Delta_p)^s u = f(x,u) & \text{ in } \Omega\\
u=0 &\text{ in } \R^n \setminus \Omega,
\end{cases}
\end{align}
where the nonlinear term $f(x,u)$ is required to have a subcritical growth in the sense of the Sobolev embeddings, and prove that any accumulation point of the sequence of solutions is in fact a solution to the local limit problem
\begin{align} \label{eq.1}
\begin{cases}
-\Delta_p u = f(x,u) & \text{ in } \Omega\\
u=0 &\text{ on } \partial \Omega.
\end{cases}
\end{align}

As a consequence of our result, under appropriate further structural assumptions on $f$, we prove that any accumulation point of a sequence of ground state solutions to the fractional Schr\"odinger equation
$$
\begin{cases}
(-\Delta_p)^s u + V(x) |u|^{p-2} u = f(x,u) & \text{ in } \Omega\\
u=0 &\text{ in } \R^n \setminus \Omega,
\end{cases}
$$
is a ground state solution to the corresponding local Schr\"odinger equation
$$
\begin{cases}
-\Delta_p u + V(x) |u|^{p-2} u = f(x,u) & \text{ in } \Omega\\
u=0 &\text{ on } \partial\Omega.
\end{cases}
$$
For this problem, our method allow us to treat almost without changes the bounded and the unbounded domain  cases.

Finally, we apply our general result to deal with the eigenvalue problem
$$
\begin{cases}
(-\Delta_p)^s u = \lambda^s |u|^{p-2} u & \text{ in } \Omega\\
u=0 &\text{ in } \R^n \setminus \Omega,
\end{cases}
$$
and get some mild generalization of the results in \cite{BPS}.

\subsection*{Organization of the paper}
The rest of the paper is organized as follows. In Section 2, we collect some preliminaries needed in the course of the work. This material is well-known to experts (with the only possible exception of Lemma \ref{key.lema}), but we choose to include it in order to make the paper as self contained as possible. In Section 3 we prove our main result (Theorem \ref{main}) about the asymptotic behavior of any family of solutions to \eqref{eq.s} that has some uniform (in $s$) bound. Finally, in Sections 4 and 5 we apply the result in Section 3 to deal with the problem of ground state solutions to the nonlinear fractional Schr\"odinger equation and to the fractional eigenvalue problem respectively.

\section{Preliminaries}

\subsection{Fractional Sobolev spaces}
Given $s\in (0,1)$ and $1\le p<\infty$, for any $u\in L^1_{\rm loc}(\R^n)$ we define the Gagliardo $(s,p)-$seminorm as
$$
[u]_{s,p}^p := K(n,s,p) \iint_{\R^n\times\R^n} \frac{|u(x)-u(y)|^p}{|x-y|^{n+sp}}\, dxdy.
$$

The constant $K(n,s,p)$ is a normalizing constant that is defined as
$$
K(n,s,p):= (1-s) \mathcal{K}(n,p),
$$
where
$$
\mathcal{K}(n,p)^{-1} = \frac{1}{p}\int_{{\mathbb S}^{n-1}} w_n^p\, dS_w
$$
and ${\mathbb S}^{n-1}$ is the unit sphere in $\R^n$.

The main property of this constant is that, for any $u\in L^p(\R^n)$, one has that
$$
\lim_{s\uparrow 1} [u]_{s,p}^p = \|\nabla u\|_p^p,
$$
where the above limit is understood as equality if $u\in W^{1,p}(\R^n)$ and $\liminf_{s\uparrow 1} [u]_{s,p}^p=\infty$ otherwise. See \cite{BBM}.

Given $\Omega\subset \R^n$ an open set, we then define the fractional order Sobolev spaces as
\begin{align*}
W^{s,p}(\R^n) &:= \{u\in L^p(\R^n)\colon [u]_{s,p}<\infty\},\\
W^{s,p}_0(\Omega) &:= \{u\in W^{s,p}(\R^n)\colon u=0 \text{ a.e. in } \R^n\setminus\Omega\}.
\end{align*}

\begin{rem}
Another way to define the fractional order Sobolev spaces is to consider the Gagliardo energy in $\Omega$
$$
[u]_{s,p;\Omega}^p := K(n,s,p) \iint_{\Omega\times\Omega} \frac{|u(x)-u(y)|^p}{|x-y|^{n+sp}}\, dxdy
$$
and then define
$$
\widetilde W^{s,p}(\Omega) := \{u\in L^p(\Omega)\colon [u]_{s,p;\Omega}<\infty\},\quad \widetilde W^{s,p}_0(\Omega) := \overline{C^\infty_c(\Omega)},
$$
where the closure is taken with respect to the norm $\|u\|_{s,p;\Omega} = (\|u\|_{p;\Omega}^p + [u]_{s,p;\Omega}^p)^\frac{1}{p}$.

It is a known fact that, in general, $\widetilde W^{s,p}_0(\Omega)\subset  W^{s,p}_0(\Omega)$, but one has equality, for instance, if $\Omega$ is a Lipschitz domain. Moreover $\widetilde W^{s,p}_0(\Omega) =  W^{s,p}_0(\Omega) = \widetilde W^{s,p}(\Omega) = W^{s,p}(\R^n)|_\Omega$ if $0<s<\frac{1}{p}$. See \cite{DNPV}.
\end{rem}

It is convenient to introduce the notation, for $0<s\le 1\le p<\infty$, 
$$
\Jsp \colon W^{s,p}_0(\Omega)\to \R
$$ 
$$
\Jsp(u) = \begin{cases}
\frac{1}{p} [u]_{s,p}^p & \text{if } 0<s<1\\
\frac{1}{p} \|\nabla u\|_p^p & \text{if } s=1.
\end{cases}
$$

A fundamental fact that will be used throughout this paper is the following theorem due to \cite{BBM} (see also \cite{Ponce}).
\begin{thm}\label{thm.BBM}
For any sequence $0<s_k\to 1$, the sequence $\{\Jk\}_{k\in\N}$ $\Gamma-$converges to $\Jp$.
\end{thm}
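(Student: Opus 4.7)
I would prove the $\Gamma$-convergence in the ambient space $L^p(\R^n)$ with its strong topology, extending each $\Jsp$ by $+\infty$ outside $W^{s,p}_0(\Omega)$ and $\Jp$ by $+\infty$ outside $W^{1,p}_0(\Omega)$. The task then reduces to the two standard De Giorgi inequalities: the $\liminf$ inequality for arbitrary sequences $u_k \to u$ in $L^p(\R^n)$, and the existence of a recovery sequence for every target $u \in L^p(\R^n)$.

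\textbf{Recovery sequence.} If $u \notin W^{1,p}_0(\Omega)$ then $\Jp(u) = +\infty$ and there is nothing to do. If $u \in W^{1,p}_0(\Omega)$, I would take the constant sequence $u_k \equiv u$. Since $u \in W^{1,p}(\R^n)$ vanishes outside $\Omega$, one checks that $u \in W^{s,p}(\R^n)$ for every $s\in(0,1)$ (this is standard for compactly supported $W^{1,p}$ functions, and in any case the BBM identity forces $[u]_{s,p}^p < \infty$ for $s$ close to $1$), so $u \in W^{s_k,p}_0(\Omega)$ for every $k$. The pointwise BBM limit recalled just before Theorem \ref{thm.BBM} gives $[u]_{s_k,p}^p \to \|\nabla u\|_p^p$, i.e., $\Jk(u) \to \Jp(u)$.

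\textbf{Liminf inequality.} Let $u_k \to u$ in $L^p(\R^n)$. One may assume $L := \liminf_k \Jk(u_k) < \infty$ and, after passing to a subsequence, that $\Jk(u_k) \to L$ with $\sup_k [u_k]_{s_k,p}^p < \infty$. The crucial input is a refinement of BBM due to Ponce \cite{Ponce}: under precisely these hypotheses one has $u \in W^{1,p}(\R^n)$ together with
\[
\|\nabla u\|_p^p \le \liminf_k [u_k]_{s_k,p}^p.
\]
Since each $u_k$ vanishes a.e.\ on $\R^n\setminus\Omega$ and $L^p$-convergence gives a.e.\ convergence along a further subsequence, $u$ also vanishes a.e.\ on $\R^n\setminus\Omega$. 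Therefore $u \in W^{1,p}_0(\Omega)$ and $\Jp(u) \le L$, as required.

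\textbf{Main obstacle.} The only substantive step is the $\liminf$ inequality, which needs Ponce's refinement of BBM tailored to sequences in which both the exponent $s_k$ and the function $u_k$ vary simultaneously; the pointwise BBM statement quoted in the excerpt is not by itself sufficient. The Dirichlet-type constraint $u|_{\R^n\setminus\Omega}=0$ passes to the $L^p$-limit trivially, and the recovery sequence follows at once from the pointwise BBM identity, so no genuine difficulty arises in those parts.
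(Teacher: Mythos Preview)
Your proposal is correct and follows the standard route to this $\Gamma$-convergence statement. Note, however, that the paper does not supply its own proof of Theorem~\ref{thm.BBM}: it is stated as a known fact ``due to \cite{BBM} (see also \cite{Ponce})'' and used as a black box. Your sketch is precisely the argument contained in those references --- the pointwise BBM limit for the recovery sequence, and Ponce's lower-semicontinuity refinement for the $\liminf$ inequality --- so there is nothing to compare beyond observing that you have unpacked the citation.
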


Recall that $\Gamma-$convergence is the notion of convergence suitable for minimization problems and it is defined as follows
\begin{defn}
Let $X$ be a metric space and $F,F_k\colon X  \to \bar \R$. We say that $F_k$  $\Gamma-$converges to $F$ if for every $u\in X$ the following conditions are valid.

\begin{itemize}
\item[(i)] (lim inf inequality) For every sequence $\{u_k\}_{k\in\N}\subset X$ such that $u_k \to u$ in $X$, 
$$
F(u)\leq \liminf_{k\to\infty} F_k(u_k).
$$

\item[(ii)] (lim sup inequality). For every $u\in X$, there is a sequence $\{u_k\}_{k\in\N}\subset X$ converging to $u$ such that 
$$
F(u)\geq  \limsup_{k\to\infty} F_k(u_k).
$$
\end{itemize}
The functional $F$ is called the $\Gamma-$limit of the sequence $\{F_k\}_{k\in\N}$ and it is denoted by $F_k \stackrel{\Gamma}{\to} F$ and 
$$
	F=\glim_{k\to\infty} F_k.
$$
\end{defn}

Another well-known fact that will be used throughout is the Sobolev immersion theorem, a proof of which can be found, for instance, in \cite{DNPV}.
\begin{thm}\label{inmersion}
Assume that $\Omega$ has finite measure. Define the critical Sobolev exponent as
$$
p^*_s := \begin{cases}
\frac{np}{n-sp} & \text{if } sp<n\\
\infty & \text{otherwise}.
\end{cases}
$$
(we will denote $p^*_1 = p^*$).

Then, $W^{s,p}_0(\Omega)\subset L^q(\Omega)$ with compact inclusion for every $1\le q < p^*_s$.
\end{thm}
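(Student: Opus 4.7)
The theorem is a classical result for fractional Sobolev spaces; my plan is to organize the proof as a continuous embedding step, followed by a compactness upgrade through mollification, and then an interpolation argument (see \cite{DNPV} for the standard reference).

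For the continuous embedding in the regime $sp<n$, the goal is the fractional Sobolev inequality
\begin{equation*}
\|u\|_{p^*_s}\le C(n,s,p)\,[u]_{s,p}\qquad\text{for every } u\in W^{s,p}_0(\Omega).
\end{equation*}
I would prove this by writing $u$ as a Riesz potential of order $s$ and invoking the Hardy--Littlewood--Sobolev inequality; a purely real-variable route using super-level set truncation is equally available. When $sp\ge n$, the same machinery (or a Morrey-type estimate when $sp>n$) gives embedding into every $L^q(\R^n)$ with $q<\infty$, which together with $|\Omega|<\infty$ settles the full range.

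For compactness, let $\{u_k\}\subset W^{s,p}_0(\Omega)$ be bounded, and mollify: set $u_k^\ve:=u_k*\rho_\ve$ for a standard mollifier. Fixing $\ve>0$, Young's inequality gives uniform $L^\infty$ bounds on both $u_k^\ve$ and $\nabla u_k^\ve$ on bounded sets, so Arzel\`a--Ascoli yields precompactness of $\{u_k^\ve\}_k$ in $L^p(\Omega)$. The essential ingredient is the uniform-in-$k$ closeness estimate
\begin{equation*}
\|u-u*\rho_\ve\|_p^p\le C(n,s,p)\,\ve^{sp}\,[u]_{s,p}^p,
\end{equation*}
which I would derive by applying Jensen to the convolution, then rewriting the Gagliardo seminorm via Fubini (with $z=y-x$) as
\begin{equation*}
[u]_{s,p}^p = K(n,s,p)\int_{\R^n}\frac{\|\tau_z u-u\|_p^p}{|z|^{n+sp}}\,dz,
\end{equation*}
and estimating its restriction to $|z|\le\ve$ using $\|\rho_\ve\|_\infty\le C\ve^{-n}$. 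A standard diagonal argument then produces an $L^p(\Omega)$-convergent subsequence of $\{u_k\}$.

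Finally, to promote $L^p$-compactness to $L^q$-compactness for $1\le q<p^*_s$, I would interpolate: for $p\le q<p^*_s$, H\"older gives
\begin{equation*}
\|u_k-u_j\|_q\le \|u_k-u_j\|_p^{1-\theta}\,\|u_k-u_j\|_{p^*_s}^\theta,\qquad \theta=\theta(p,q,p^*_s)\in[0,1),
\end{equation*}
and the $L^{p^*_s}$-norms stay bounded by the continuous embedding; for $1\le q<p$, $L^q$-convergence follows from $L^p$-convergence since $|\Omega|<\infty$. The only analytically substantive step is the mollification estimate; everything else is either the classical fractional Sobolev inequality or a standard Arzel\`a--Ascoli / H\"older argument.
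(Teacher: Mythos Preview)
The paper does not prove this theorem at all: it is stated as a well-known fact with a reference to \cite{DNPV}, and no argument is given. Your sketch is essentially the standard proof one finds there---fractional Sobolev inequality for the continuous part, the mollification estimate $\|u-u*\rho_\ve\|_p^p\le C\ve^{sp}[u]_{s,p}^p$ combined with Arzel\`a--Ascoli for compactness in $L^p$, and interpolation to reach the full subcritical range---so there is no methodological divergence to discuss.

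One small point worth tightening: the hypothesis is that $\Omega$ has finite measure, not that it is bounded. Your Arzel\`a--Ascoli step, as written, gives precompactness of $\{u_k^\ve\}$ only on compact subsets of $\R^n$; to get precompactness in $L^p(\Omega)$ you still need a tail estimate. This is immediate once you note that $\|u_k^\ve\|_\infty$ is uniformly bounded (by your Young's inequality step) and that for any $\delta>0$ one can choose $R$ with $|\Omega\setminus B_R|<\delta$, so $\|u_k^\ve\|_{L^p(\Omega\setminus B_R)}^p\le \|u_k^\ve\|_\infty^p\,\delta$. With that remark added, the argument is complete.
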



The following notation will be enforced.
\begin{defn}
Given $0<s\le 1\le p <\infty$, the (topological) dual space of $W^{s,p}_0(\Omega)$ will be denoted by $W^{-s,p'}(\Omega)$.
\end{defn}

\subsection{Weak solutions}

Recall that, for any $0<s\le 1< p<\infty$, the functional $\Jsp$ is Fr\'echet differentiable and $\Jsp'\colon W^{s,p}_0(\Omega)\to W^{-s,p'}(\Omega)$ is continuous and is given by
\begin{align*}
&\langle \Jsp'(u), v\rangle = \frac{K(n,s,p)}{2}\iint_{\R^n\times\R^n} \frac{|u(x)-u(y)|^{p-2}(u(x)-u(y))(v(x)-v(y))}{|x-y|^{n+sp}}\, dxdy,\\
&\langle \Jp'(u), v\rangle = \int_\Omega |\nabla u|^{p-2}\nabla u\cdot\nabla v\, dx.
\end{align*}

Therefore, for $0<s\le 1<p$ we define the \emph{fractional $p-$Laplace operator} as $(-\Delta_p)^s := \Jsp'$. Hence we say that $u\in W^{s,p}_0(\Omega)$ is a \emph{weak solution} of \eqref{eq.s} if
$$
\langle (-\Delta_p)^s u, v\rangle = \int_\Omega f(x,u)v\,dx
$$
for all $v\in W^{s,p}_0(\Omega)$. Similarly, $u\in W^{1,p}_0(\Omega)$ is a \emph{weak solution} of \eqref{eq.1} if
$$
\langle -\Delta_p u, v\rangle = \int_\Omega f(x,u)v\,dx
$$
for all $v\in W^{1,p}_0(\Omega)$.

It is worth of mention that this operator is \emph{monotone} in the sense that for any $u,v \in W^{s,p}_0(\Omega)$ it holds that
$$
0\leq \langle (-\Delta_p)^s u -  (-\Delta_p)^s v ,u-v \rangle.
$$
\begin{rem}
Although it will not be used in this work, the operator $(-\Delta_p)^s$ is in fact {\em strictly monotone}. This is a consequence of a well known inequality proved by \cite{Simon}
$$
(|a|^{p-2}a - |b|^{p-2}b)\cdot (a-b) \ge \begin{cases}
c |a-b|^p & \text{if } p\ge 2\\
c \frac{|a-b|^2}{(|a|+|b|)^{2-p}} & \text{if } 1<p<2,
\end{cases}
$$
for any $a, b\in \R^N$ ($N\in \N$), where the constant $c$ depends on $p$ and $N$.

This immediately implies that
$$
\langle (-\Delta_p)^s u -  (-\Delta_p)^s v ,u-v \rangle \ge \begin{cases}
c \Jsp(u-v) & \text{if } p\ge 2\\
c \frac{\Jsp(u-v)^\frac{2}{p}}{(\Jsp(u) + \Jsp(v))^\frac{2-p}{p}} & \text{if } 1<p<2,
\end{cases}
$$
for $0<s\le 1<p<\infty$, the constant $c$ depending only on $p$ and $n$.
\end{rem}

The next lemma gives us some uniform asymptotic development for the functionals $\Jsp$.

\begin{lema}\label{asymptotic.development}
Let $u\in W^{1,p}_0(\Omega)$ be fixed and for any $s\in (0,1]$, let $v_s\in W^{s,p}_0(\Omega)$ be such that $[v_s]_{s,p}<C$ for any $s\in (0,1]$. Then, for $t>0$,
$$
\Jsp(u+tv_s) = \Jsp(u) + t\langle (-\Delta_p)^s u,v_s\rangle + o(t),
$$
where $o(t)$ depends only on $C$.
\end{lema}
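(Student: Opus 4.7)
The plan is to carry out a pointwise Taylor expansion of the integrand of $\Jsp(u+tv_s)$ and control the remainder uniformly in $s$ using H\"older's inequality. The starting point is the elementary algebraic inequality that, for any $a,b\in\R$ and $t>0$,
$$
\left|\tfrac{1}{p}|a+tb|^p - \tfrac{1}{p}|a|^p - t|a|^{p-2}a\cdot b\right| \le
\begin{cases}
C_p\bigl(|a|^{p-2}|b|^2 t^2 + |b|^p t^p\bigr) & \text{if } p\ge 2,\\
C_p |b|^p t^p & \text{if } 1<p<2.
\end{cases}
$$
Both estimates come from writing the left-hand side as $\int_0^t \bigl(|a+\tau b|^{p-2}(a+\tau b) - |a|^{p-2}a\bigr)\cdot b\,d\tau$ and then applying the Lipschitz-type bound $\bigl||x|^{p-2}x - |y|^{p-2}y\bigr| \le C_p(|x|+|y|)^{p-2}|x-y|$ (valid for $p\ge 2$) or the H\"older-type bound $\bigl||x|^{p-2}x - |y|^{p-2}y\bigr| \le C_p |x-y|^{p-1}$ (valid for $1<p<2$).

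I would then apply this pointwise with $a = u(x)-u(y)$ and $b = v_s(x)-v_s(y)$, divide by $|x-y|^{n+sp}$, integrate over $\R^n\times\R^n$, and multiply through by $K(n,s,p)$. By design, the linear term reproduces $t\langle (-\Delta_p)^s u, v_s\rangle$, while the left-hand side gives $\Jsp(u+tv_s) - \Jsp(u) - t\langle (-\Delta_p)^s u, v_s\rangle$. The task is thus reduced to bounding the resulting remainder uniformly in $s\in (0,1]$.

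The pure $|b|^p t^p$ piece integrates directly against the kernel to yield $t^p[v_s]_{s,p}^p \le C^p t^p$. The cross term $|a|^{p-2}|b|^2$, present only for $p\ge 2$, is handled by H\"older's inequality with exponents $p/(p-2)$ and $p/2$; the key observation is that the $K(n,s,p)$ factors combine cleanly, giving
$$
K(n,s,p)\iint_{\R^n\times\R^n}\frac{|u(x)-u(y)|^{p-2}|v_s(x)-v_s(y)|^2}{|x-y|^{n+sp}}\,dxdy \le [u]_{s,p}^{p-2}\,[v_s]_{s,p}^2.
$$
Using $[v_s]_{s,p}\le C$, the remainder is thus bounded by a constant times $(1+[u]_{s,p}^{p-2})(t^2 + t^p)$, which is $o(t)$ as $t\downarrow 0$ provided $[u]_{s,p}$ is uniformly bounded for $s\in (0,1]$. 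This last fact follows from Theorem \ref{thm.BBM} near $s=1$ and, more generally, from the translation estimate $\|u(\cdot+h)-u(\cdot)\|_p \le |h|\,\|\nabla u\|_p$ together with a splitting of the Gagliardo integral according to $|x-y|\le 1$ and $|x-y|>1$.

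The main obstacle is the dichotomy between $p\ge 2$ and $1<p<2$: in the subquadratic case one only has H\"older regularity of $a\mapsto |a|^{p-2}a$, which forces the weaker but still sufficient bound $|b|^p t^p$; in the superquadratic case one gains an extra order of smoothness but picks up the mixed term $|a|^{p-2}|b|^2$, whose treatment requires the H\"older argument above so that the normalizing constants $K(n,s,p)$ cancel rather than degenerate as $s\to 1$. In both regimes the remainder is $o(t)$ with a constant depending only on $n$, $p$, $\|\nabla u\|_p$ and $C$, as required.
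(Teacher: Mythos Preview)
Your argument is correct and follows essentially the same route as the paper: both rest on the pointwise Taylor expansion of $|a+tb|^p$ with the $p\ge 2$ versus $1<p<2$ dichotomy, then integrate against the Gagliardo kernel. Your version is simply more explicit, spelling out the H\"older step for the cross term and the uniform bound on $[u]_{s,p}$ that the paper's three-line proof leaves implicit.
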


\begin{proof}
The proof is a direct consequence of the elementary estimate
$$
|a+tb|^p = \begin{cases}
|a|^p + tp|a|^{p-2}ab + O(t^2) & \text{if } p\ge 2\\
|a|^p + tp|a|^{p-2}ab + o(t^p) & \text{if } p< 2,
\end{cases}
$$
where $O(t^2)$ (or $o(t^p)$ respectively) is uniform in $|b|$.
\end{proof}

With the help of Lemma \ref{asymptotic.development} we can prove a key lemma that can be though as an extension of Theorem \ref{thm.BBM}.

\begin{lema} \label{key.lema}
Let $s_k\uparrow 1$ and $v_k\in W^{s_k,p}_0(\Omega)$ be such that $\sup_{k\in\N} [v_k]_{s_k,p}^p<\infty$. Assume, without loss of generality, that $v_k\to v$ strongly in $L^p(\Omega)$. Then, for every $u\in W^{1,p}_0(\Omega)$, we have
$$
\langle (-\Delta_p)^{s_k} u, v_k\rangle \to   \langle -\Delta_p u, v\rangle.
$$
\end{lema}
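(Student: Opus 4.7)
The idea is to recover the pairings $\langle (-\Delta_p)^{s_k}u,v_k\rangle$ from a difference quotient of energies, and then pass to the limit using the $\Gamma$-convergence of Theorem \ref{thm.BBM} together with the uniform asymptotic development of Lemma \ref{asymptotic.development}.

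First, fix $t>0$ and apply Lemma \ref{asymptotic.development} with the fixed function $u\in W^{1,p}_0(\Omega)\subset W^{s_k,p}_0(\Omega)$ and the perturbations $v_k$, which are uniformly bounded by hypothesis ($\sup_k[v_k]_{s_k,p}\le C$). This yields
\begin{equation*}
\Jk(u+tv_k) = \Jk(u) + t\langle(-\Delta_p)^{s_k}u,v_k\rangle + \omega(t),
\end{equation*}
where $\omega(t)/t\to 0$ as $t\to 0^+$ \emph{uniformly in $k$} since the remainder in Lemma \ref{asymptotic.development} only depends on $C$ and on $u$. Next, since $v_k\to v$ strongly in $L^p(\Omega)$, also $u+tv_k\to u+tv$ in $L^p(\Omega)$, so the $\liminf$ inequality in Theorem \ref{thm.BBM} gives
\begin{equation*}
\liminf_{k\to\infty}\Jk(u+tv_k) \ge \Jp(u+tv),
\end{equation*}
while the classical Bourgain--Brezis--Mironescu limit $[u]_{s,p}^p\to\|\nabla u\|_p^p$ for $u\in W^{1,p}(\R^n)$ yields $\Jk(u)\to \Jp(u)$. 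Combining these three facts and dividing by $t>0$ produces
\begin{equation*}
\liminf_{k\to\infty}\langle(-\Delta_p)^{s_k}u,v_k\rangle \ge \frac{\Jp(u+tv)-\Jp(u)}{t} - \frac{\omega(t)}{t},
\end{equation*}
and letting $t\to 0^+$ and invoking the Fr\'echet differentiability of $\Jp$ at $u$ with $\Jp'(u)=-\Delta_p u$ gives the lower bound $\liminf_{k}\langle(-\Delta_p)^{s_k}u,v_k\rangle\ge \langle -\Delta_p u,v\rangle$.

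For the matching upper bound, I apply exactly the same argument to the sequence $-v_k$, which also satisfies $\sup_k[-v_k]_{s_k,p}\le C$ and converges in $L^p(\Omega)$ to $-v$. The resulting lower bound for $\liminf_k\langle(-\Delta_p)^{s_k}u,-v_k\rangle$ translates into
\begin{equation*}
\limsup_{k\to\infty}\langle(-\Delta_p)^{s_k}u,v_k\rangle \le \langle -\Delta_p u,v\rangle,
\end{equation*}
and together with the previous inequality the limit exists and equals $\langle -\Delta_p u,v\rangle$.

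The only delicate point is the uniformity in $k$ of the remainder in the asymptotic development; this is precisely the content of Lemma \ref{asymptotic.development}, whose pointwise estimate for $|a+tb|^p$ in terms of $|b|$ allows one to integrate and obtain a bound depending only on the uniform seminorm $C$. Once this uniformity is in hand, the proof is just a clean application of the $\liminf$ part of $\Gamma$-convergence applied at the two perturbations $u\pm tv$ and a pass to the limit $t\to 0^+$.
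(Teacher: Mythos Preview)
Your proof is correct and follows essentially the same approach as the paper: both combine the $\liminf$ inequality from the $\Gamma$-convergence of Theorem~\ref{thm.BBM}, the pointwise limit $\Jk(u)\to\Jp(u)$, and the uniform asymptotic development of Lemma~\ref{asymptotic.development}, then let $t\to 0^+$. The only cosmetic difference is that the paper obtains the matching upper bound by replacing $u$ with $-u$ (using the oddness of the operator), whereas you replace $v_k$ with $-v_k$ (using linearity of the pairing in the second slot); both symmetries yield the reverse inequality equally well.
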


\begin{proof}
First, observe that from the results in \cite{BBM}, it follows that $v\in W^{1,p}_0(\Omega)$ and so everything is well defined.

Now, it is enough to show that
\begin{equation} \label{desig.1}
\langle -\Delta_p u, v\rangle  \leq    \liminf_{k\to\infty} \langle (-\Delta_p)^{s_k} u, v_k\rangle.
\end{equation}
In fact, if \eqref{desig.1} holds for every $u\in W^{1,p}_0(\Omega)$, then apply \eqref{desig.1} to $-u$ to get the reverse inequality.

Now, by a refinement of Section 3  in \cite{BBM} (see also  \cite{DPS},  \cite{FBS} or \cite{Ponce}), we have
$$
\Jp (u+tv) \leq \liminf_{k\to\infty} \Jk(u+tv_k).
$$
The previous expression together with \cite{BBM} gives that
$$
\Jp(u+tv) -\Jp(u) \leq \liminf_{k\to\infty} (\Jk(u+tv_k) - \Jk(u)).
$$

Applying now Lemma \ref{asymptotic.development}, we obtain
$$
\langle -\Delta_p u, v\rangle  + o(1) \leq \liminf_{k\to\infty} \langle (-\Delta_p)^{s_k} u, v_k\rangle + o(1),
$$
from where \eqref{desig.1} follows.
\end{proof}

\section{Stability of weak solutions}

In this section, we prove our main result on the convergence of solutions of problems \eqref{eq.s} to solutions of \eqref{eq.1}.

In this section we ask the nonlinearity $f$ to satisfy the following hypotheses:
\begin{enumerate}
\item[($f_1$)]   $f\colon\Omega\times  \R \to \R$ is a Carath\'eodory function, i.e. $f(\cdot,z)$ is measurable for any $z\in\R$  and $f(x,\cdot)$ is continuous a.e. $x\in \Omega$.

\item[($f_2$)] There exist a constant $C>0$ such that $|f(x,z)|\le C(1+|z|)^{q-1}$ for some $q\in [1,p^*)$.
\end{enumerate}

\begin{rem}
Observe that if $q\in [1,p^*)$, then there exists $s_0\in (0,1)$ such that $q<p^*_s$ for any $s\in [s_0,1)$.
\end{rem}

\begin{rem}
Hypotheses ($f_1$) and ($f_2$) are the natural requirements to define the notion of weak solutions for \eqref{eq.s} and \eqref{eq.1}.
\end{rem}

\begin{thm}\label{main}
Let $0<s_k\to 1$ and let $u_k\in W^{s_k,p}_0(\Omega)$ be a sequence of solutions of \eqref{eq.s} such that $\sup_{k\in\N} [u_k]_{s_k,p}^p<\infty$. Then, any accumulation point $u$ of the sequence $\{u_k\}_{k\in\N}$ in the $L^p(\Omega)-$topology verifies that $u\in W^{1,p}_0(\Omega)$ and it is a weak solution of \eqref{eq.1}.
\end{thm}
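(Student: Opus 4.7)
The plan is to combine the $\Gamma$-liminf from Theorem \ref{thm.BBM}, Lemma \ref{key.lema} for the ``cross'' linear term, and the classical Minty monotonicity trick.

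\textbf{Membership in $W^{1,p}_0(\Omega)$.} Viewing $\Jk$ and $\Jp$ as functionals on $L^p(\Omega)$ extended by $+\infty$ outside their natural domains, Theorem \ref{thm.BBM} together with $u_k\to u$ in $L^p$ and the uniform bound $\sup_k\Jk(u_k)<\infty$ yields $\Jp(u)\le \liminf_k \Jk(u_k)<\infty$, hence $u\in W^{1,p}_0(\Omega)$.

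\textbf{Monotonicity inequality and passage to the limit.} Fix a test function $\varphi\in C^\infty_c(\Omega)\subset W^{s_k,p}_0(\Omega)\cap W^{1,p}_0(\Omega)$. Monotonicity of $(-\Delta_p)^{s_k}$ together with the weak formulation of \eqref{eq.s} tested against $u_k-\varphi$ gives
$$
\int_\Omega f(x,u_k)(u_k-\varphi)\,dx = \langle (-\Delta_p)^{s_k} u_k, u_k-\varphi\rangle \ge \langle (-\Delta_p)^{s_k}\varphi, u_k-\varphi\rangle.
$$
I would now pass to the limit on both sides. On the right, Lemma \ref{key.lema} applies with the roles of $u$ and $v_k$ played by $\varphi$ and $u_k-\varphi$: the seminorm $[u_k-\varphi]_{s_k,p}$ is uniformly bounded by the hypothesis together with $[\varphi]_{s_k,p}^p\to\|\nabla\varphi\|_p^p$, and $u_k-\varphi\to u-\varphi$ in $L^p$, so $\langle (-\Delta_p)^{s_k}\varphi,u_k-\varphi\rangle\to\langle -\Delta_p\varphi,u-\varphi\rangle$. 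For the left-hand side, subcriticality ($f_2$) and Theorem \ref{inmersion} (combined with the remark that $q<p^*_{s_k}$ for $k$ large) bound $u_k$ uniformly in some $L^{\tilde q}(\Omega)$ with $\tilde q\in(q,p^*)$; interpolating with the given strong $L^p$ convergence yields $u_k\to u$ in $L^q(\Omega)$, and continuity of the Nemitskij operator then gives $f(\cdot,u_k)\to f(\cdot,u)$ in $L^{q'}(\Omega)$, so the products converge.

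\textbf{Minty trick and conclusion.} The limit inequality
$$
\int_\Omega f(x,u)(u-\varphi)\,dx \ge \langle -\Delta_p \varphi, u-\varphi\rangle
$$
then holds for every $\varphi\in C^\infty_c(\Omega)$, and by density and continuity for every $\varphi\in W^{1,p}_0(\Omega)$. Substituting $\varphi = u - t\psi$ with $\psi\in W^{1,p}_0(\Omega)$ and $t>0$, dividing by $t$, and letting $t\to 0^+$ using the continuity of $-\Delta_p:W^{1,p}_0(\Omega)\to W^{-1,p'}(\Omega)$ yields $\int_\Omega f(x,u)\psi\,dx\ge \langle -\Delta_p u,\psi\rangle$; applying to $\pm\psi$ recovers equality, i.e. the weak formulation of \eqref{eq.1}. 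The main obstacle is precisely the cross term $\langle (-\Delta_p)^{s_k}\varphi,u_k-\varphi\rangle$ produced by the monotonicity trick, whose passage to the limit is exactly what Lemma \ref{key.lema} is designed to handle; once that step is in place, the remainder is the standard Browder--Minty machinery adapted to the nonlocal-to-local setting.
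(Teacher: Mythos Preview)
Your argument is correct and follows essentially the same route as the paper: monotonicity of $(-\Delta_p)^{s_k}$, the weak formulation of \eqref{eq.s} tested against $u_k-\varphi$, Lemma \ref{key.lema} for the cross term, and then the Minty trick with $\varphi=u-t\psi$. The only cosmetic difference is that the paper first passes to a weak limit $\eta$ of $(-\Delta_p)^{s_k}u_k$ in $W^{-1,p'}(\Omega)$ and then identifies $\eta=-\Delta_p u$, whereas you bypass $\eta$ and work directly with the integral $\int_\Omega f(x,u)(u-\varphi)\,dx$; you also supply a bit more detail on the Nemitskij convergence than the paper does.
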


\begin{proof}
Assume that $u_k\to u$ in $L^p(\Omega)$. Then, since $\{u_k\}_{k\in\N}$ is uniformly bounded in $W^{s_k,p}_0(\Omega)$, by \cite{BBM} we obtain that $u\in W^{1,p}_0(\Omega)$. Passing to a subsequence, if necessary, we can also assume that $u_k\to u$ a.e. in $\Omega$.

On the other hand, if we define $\eta_k:= (-\Delta_p)^{s_k} u_k\in W^{-s_k,p'}(\Omega)\subset W^{-1,p'}(\Omega)$, then $\{\eta_k\}_{k\in\N}$ is bounded in $W^{-1,p'}(\Omega)$ and hence, up to a subsequence, there exists $\eta\in W^{-1,p'}(\Omega)$ such that $\eta_k\cd \eta$ weakly in $W^{-1,p'}(\Omega)$.

Since $u_k$ solves \eqref{eq.s}, for any  $v\in C_c^\infty(\Omega)$
$$
0 = \langle (-\Delta_p)^{s_k} u_k ,v \rangle    -\int_\Omega f(x,u_k)v\,dx
$$
using the convergences, taking the limit $k\to\infty$ we get
$$
0=\langle \eta, v \rangle -\int_\Omega f(x,u)v\,dx.
$$
We want to identify $\eta$, more precisely, we will prove that 
\begin{equation} \label{will.prove}
\langle \eta, v \rangle = \langle -\Delta_p u,v\rangle.
\end{equation}
For that purpose we use the monotonicity of the operator and the fact that $u_k$ is solution of \eqref{eq.s}, Indeed,
\begin{align*}
0&\leq \langle (-\Delta_p)^{s_k} u_k,u_k-v \rangle - \langle (-\Delta_p)^{s_k} v, u_k-v \rangle\\
&= 
\int_\Omega f(x,u_k) (u_k-v)\,dx  - \langle (-\Delta_p)^{s_k} v, u_k - v\rangle.
\end{align*}
Hence taking the limit $k\to\infty$ and using Lemma \ref{key.lema} one finds that
\begin{align*}
0&\leq \int_\Omega f(x,u)(u-v) -   \langle -\Delta_p v,u-v \rangle \\
&=\langle \eta, u-v \rangle -\langle -\Delta_p v,u-v \rangle.
\end{align*}
Consequently, if we take $v = u-tw$, $w\in W^{1,p}_0(\Omega)$ given and $t>0$, we obtain that
$$
0\leq \langle \eta, w \rangle - \langle -\Delta_p (u-tw), w\rangle
$$
taking $t\to 0^+$ gives that
$$
0\leq \langle \eta, w \rangle - \langle -\Delta_p u, w\rangle.
$$
From this it is easy to see that \eqref{will.prove} holds and the proof concludes.
\end{proof}

\begin{rem}\label{remark.clave}
The results of Theorem \ref{main} can be easily improved by considering
$$
\begin{cases}
(-\Delta_p)^s u = f_s(x,u) & \text{ in } \Omega\\
u=0 &\text{ in } \R^n \setminus \Omega,
\end{cases}
$$
where $f_s(x,z)\to f(x,z)$ uniformly on compact sets of $z\in\R$. The proof of this fact is completely analogous to that of Theorem \ref{main} and is left to the reader.
\end{rem}

\section{Convergence of ground states}
This section is devoted to study the behavior of ground state (or least-energy) solutions of the nonlocal Schr\"odinger problem
\begin{equation} \label{eq.s.sch}
\begin{cases}
(-\Delta_p)^s u + V(x) |u|^{p-2}u = f(x,u) & \text{ in } \Omega\\
u=0 &\text{ in } \R^n \setminus \Omega,
\end{cases}
\end{equation}
to the limit problem
\begin{align} \label{eq.1.sch}
\begin{cases}
-\Delta_p u + V(x) |u|^{p-2}u = f(x,u) & \text{ in } \Omega\\
u=0 &\text{ on } \partial \Omega.
\end{cases}
\end{align}

In the semilinear case, that is when $p=2$, this problem was addressed in \cite{BS}. The methods used in that paper heavily use the linearity of the operator. Here we show how applying the results in the previous section, we can extend the main theorem in \cite{BS} to the more general quasilinear case. Moreover, in \cite{BS} only the bounded domain case is considered. Here we will consider both the bounded and unbounded domain cases.

Recall that ground state solutions are minimizers of the energy functional
$$
\I_{s,p}(u) := \Jsp(u) + \frac{1}{p}\int_\Omega |u|^p V(x)\, dx - \int_\Omega F(x,u)\, dx,
$$
restricted to the so-called {\em Nehari manifold}
$$
\Nsp:=\{ u \in W^{s,p}_0(\Omega) \setminus \{0\} \colon \langle \I_{s,p}'(u),u \rangle=0\}.
$$
Here $F(x,z) = \int_0^z f(x,\tau)\, d\tau$ is the primitive of $f$.

From Theorem \ref{main}, we know that if $\{u_s\}_{s\in(0,1)}$ is a sequence of solutions to \eqref{eq.s.sch}, then any accumulation point (in $L^p(\Omega)$) $u$ is a solution to \eqref{eq.1.sch}. The natural question now is to see if $u$ is a ground state solution whenever the sequence $\{u_s\}_{s\in(0,1)}$ are also ground states.

\subsection{The bounded domain case}
In this subsection, we assume that $\Omega$ is bounded. On the nonlinearity $f$, besides  ($f_1$) and ($f_2$) will be assumed to fulfill the following further structural hypothesis that are standard when consider ground state solutions in nonlinear problems (see, for instance, \cite{Szulkin})

\medskip

\begin{enumerate}
\item[($f_3$)]  $F(x,z)|z|^{-p} \to \infty$ as $|z|\to\infty$ uniformly with respect to  $x\in \Omega$.

\item[($f_4$)] $f(x,z)=o(|z|^{p-1})$ as $z\to 0$ uniformly with respect to $x\in\Omega$.

\item[($f_5$)]   For almost every $x\in\Omega$
$$
\frac{f(x,z)}{|z|^{p-1}} \quad \text{ is strictly increasing on } (-\infty, 0)\cup(0,\infty).
$$

\item[($f_6$)] There exists $\mu>p$ such that
$$
\mu F(x,z)\le zf(x,z).
$$
\end{enumerate}

On the potential function $V$ we assume:

\begin{enumerate}
\item[($V_1$)] $0\le V\in L^r(\Omega)$ for some $r>\frac{n}{p}$.
\end{enumerate}

\begin{rem}
From hypotheses ($f_1$) and ($f_4$) it follows that for every $\ve>0$ there is $C_\ve>0$ such that 
\begin{equation}   \label{F5} 
|f(x,z)|\leq \ve |z|^{p-1} + C_\ve |z|^{q-1}
\end{equation}
for every $z\in\R$ and a.e. $x\in\Omega$. 
\end{rem}

\begin{rem}
Hypothesis ($f_6$) is the fundamental structural hypothesis needed in variational arguments for the existence of ground state solutions to \eqref{eq.s.sch} and \eqref{eq.1.sch}. This is the well-known {\em Ambrosetti-Rabinowitz condition} first introduced in \cite{AR}. See also \cite{Yu} for the quasilinear case in the local setting.
\end{rem}

It is well-known that under ($f_1$)--($f_6$) a ground states actually exists. Indeed, for every $s\in (0,1]$ a ground state solution is a mountain pass solution and hence it fulfills the formula
\begin{equation} \label{caracterizacion}
\I_{s,p}(u_s)= \inf_{v\in W^{s,p}_0(\Omega)\setminus \{0\}} \sup_{t\in [0,1]} \I_{s,p}(tv)>0.
\end{equation}

Moreover, the Nehari manifold $\Nsp$ is homeomorphic to the unit sphere $\mathcal{S}_{s,p}$ in $W^{s,p}_0(\Omega)$ with homeomorphism is given by
$$
m_s(u)=t^s_u u
$$
where $t^s_u$ is the unique positive number such that $t^s_u u \in \Nsp$.

See, for instance, \cite{Jabri} for a good introduction to this subject and a proof of all of these facts in the local setting. 

\medskip

It will be convenient to introduce the notation for $s\in (0,1)$
$$
\|u\|_{s,p,V} = \|u\|_s := \left([u]_{s,p}^p + \int_\Omega |u|^p V(x)\, dx\right)^\frac{1}{p}
$$
and
$$
\|u\|_{1,p,V} :=  \left(\|\nabla u\|_p^p + \int_\Omega |u|^p V(x)\, dx\right)^\frac{1}{p}
$$

Observe that with this notation, we have
$$
\I_{s,p}(u) = \frac{1}{p}\|u\|_s^p - \int_\Omega F(x,u)\, dx.
$$

Our first lemma shows that the mountain pass levels of the ground state solutions of \eqref{eq.s.sch} are uniformly bounded.

\begin{lema} \label{cota.limsup}
Under the above notation and assumptions, it holds that $c_1 \geq \limsup_{s\uparrow 1} c_s$.
\end{lema}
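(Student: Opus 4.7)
The strategy is to use a ground state $u_1 \in W^{1,p}_0(\Omega)$ of the local problem \eqref{eq.1.sch} as a competitor in the minimax formula \eqref{caracterizacion} for $c_s$, and then to let $s\uparrow 1$ with the help of Theorem~\ref{thm.BBM} and the subcritical growth of $F$.

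First I would pick a ground state $u_1 \in \Np$ of \eqref{eq.1.sch}, so that $c_1 = \I_{1,p}(u_1) = \max_{t\geq 0}\I_{1,p}(tu_1)$ by ($f_5$). Since $\Omega$ is bounded, $W^{1,p}_0(\Omega)\subset W^{s,p}_0(\Omega)$ for every $s\in(0,1)$, and ($f_5$) again provides a unique $t_s>0$ with $t_s u_1\in\Nsp$ and $\I_{s,p}(t_s u_1)=\max_{t\geq 0}\I_{s,p}(tu_1)$. Testing \eqref{caracterizacion} with a sufficiently large scaling of $u_1$ (equivalently, using the Nehari characterization $c_s = \inf_{\Nsp}\I_{s,p}$) then yields
$$
c_s \leq \I_{s,p}(t_s u_1).
$$

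The main technical point is to establish a uniform bound $t_s\leq T$ as $s\uparrow 1$. Here hypothesis ($f_3$) is decisive: for every $\Lambda>0$ there exist $R>0$ and a constant $C_R$ such that $F(x,z)\geq \Lambda|z|^p - C_R$ for all $(x,z)\in\Omega\times\R$. Combined with Theorem~\ref{thm.BBM}, which gives $\|u_1\|_{s,p,V}^p\to \|u_1\|_{1,p,V}^p$ and hence a uniform bound $\|u_1\|_{s,p,V}^p\leq M$ near $s=1$, choosing $\Lambda$ so large that $\Lambda\|u_1\|_p^p > M/p + 1$ produces
$$
\I_{s,p}(tu_1) \leq \frac{t^p}{p}M - \Lambda t^p \|u_1\|_p^p + C_R|\Omega| \leq -t^p + C_R|\Omega|,
$$
which is negative whenever $t$ exceeds a threshold $T$ depending only on $u_1$, $V$, $\Lambda$ and $C_R$ but not on $s$. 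Hence $t_s \leq T$ for all $s$ close enough to $1$.

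Finally, along any sequence $s_k\uparrow 1$ I would extract a further subsequence with $t_{s_k}\to t_*\in[0,T]$. Theorem~\ref{thm.BBM} gives $[u_1]_{s_k,p}^p\to\|\nabla u_1\|_p^p$; the potential term $\frac{t^p}{p}\int_\Omega V|u_1|^p\,dx$ is clearly continuous in $t$; and the subcritical growth ($f_2$) combined with dominated convergence yields $\int_\Omega F(x,t_{s_k}u_1)\,dx\to \int_\Omega F(x,t_*u_1)\,dx$. Collecting these,
$$
\limsup_{k\to\infty}c_{s_k} \leq \lim_{k\to\infty}\I_{s_k,p}(t_{s_k}u_1) = \I_{1,p}(t_*u_1) \leq \max_{t\geq 0}\I_{1,p}(tu_1) = c_1.
$$
Since the extraction was from an arbitrary subsequence of $s\uparrow 1$, this delivers $\limsup_{s\uparrow 1} c_s \leq c_1$. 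I expect the uniform bound on $t_s$ to be the only real obstacle; the limit passage is essentially automatic from BBM and ($f_2$).
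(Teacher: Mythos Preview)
Your proof is correct and follows the same overall scheme as the paper's: fix a competitor in $\Np$, project it onto $\Nsp$ via the scalar $t_s$, bound $t_s$ uniformly in $s$, and pass to the limit using the BBM convergence of the Gagliardo energy together with dominated convergence on the nonlinear term.

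The differences are in the execution. The paper takes an \emph{arbitrary} $u\in\Np$, uses the Nehari identity together with ($f_6$) and ($f_3$) to bound $t_s$, then invokes ($f_4$) to rule out $t_0=0$ and finally identifies $t_0=1$ from the limit Nehari identity; only after that does it take the infimum over $\Np$. You instead choose directly the ground state $u_1$, obtain the bound on $t_s$ from the cruder observation that $\I_{s,p}(tu_1)<0$ for $t$ large uniformly in $s$ (a direct consequence of ($f_3$) and the BBM limit), and then avoid identifying $t_*$ altogether by using $\I_{1,p}(t_* u_1)\le \max_{t\ge 0}\I_{1,p}(tu_1)=c_1$. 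Your route is a bit shorter and does not need ($f_4$) or ($f_6$) at this stage; the paper's route yields the extra information $t_s\to 1$, which, however, is not used elsewhere.
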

\begin{proof}
Let   $u\in \Np$. Then  have that there exists $t_s>0$ such that $t_s u \in \Nsp$ and hence
\begin{align*}
\limsup_{s\uparrow 1} c_s &\leq \limsup_{s\uparrow 1} \I_{s,p}(t_s u)\\
&= \limsup_{s\uparrow 1} \left(\I_{s,p}(t_s u) - \frac1p \langle \I_{s,p}'(t_s u), t_s u\rangle \right)\\
&= \frac1p \left( \int_\Omega f(x,t_s u) t_s u - p F(x,t_s u)\, dx \right).
\end{align*}

In view of ($f_6$) we have that
\begin{align*}
\|u\|_s^p &= \frac{1}{t_s^p} \int_\Omega f(x,t_s u) t_s u\,dx\\ 
&\geq \frac{p}{t_s^p}\int_\Omega F(x,t_s u)\,dx = p\int_\Omega \frac{F(x,t_s u) }{t_s^p |u|^p} |u|^p\,dx.
\end{align*}
Since the left hand side of the inequality above tends to $\|u\|_{1,p, V}^p <\infty$, ($f_3$) implies that $\{t_s\}_{s}$ is bounded.

Let $t_0\ge 0$ be any accumulation point of $\{t_s\}_s$ and $\{t_{s_k}\}_{k\in\N}\subset \{t_s\}_s$ be such that $t_{s_k}\to t_0$ as $k\to\infty$. Let us see that in fact $t_0\neq 0$. In view of the Nehari identity
$$
\|u\|_{s_k}= \int_\Omega \frac{f(x,t_{s_k}u)}{t_{s_k}^{p-1} |u|^{p-1}}  |u|^p\,dx
$$
But again, the left hand side tends to $\|u\|_{1,p, V}^p>0$ and so, by ($f_4$) we get that $t_0>0$.

Furthermore, in view of \eqref{F5} we have that
$$
|f(x,t_{s_k} u) t_{s_k} u|\leq  \ve |t_{s_k}u|^p + C_\ve |t_{s_k} u|^q \leq C (|u|^p + |u|^q),
$$
where $C>0$ is independent of $k$, then by using the dominated convergence theorem,
$$
\int_\Omega f(x,t_{s_k} u) t_{s_k} u \,dx \to \int_\Omega f(x,t_0 u) t_0 u\,dx.
$$

In view of the computations above, as $k\to\infty$ we get
$$
t_0^p \|u\|_{1,p, V}^p = \int_\Omega f(x,t_0 u) t_0 u\,dx,
$$
but since $u\in \Np$, we deduce that $t_0=1$ and then $t_s\uparrow 1$ as $s\uparrow 1$.

Moreover, due to ($f_6$) we can apply again the dominated convergence theorem in the limit as $s\uparrow 1$ in the integral $ \int_\Omega F(x,t_s u)\,dx$ giving that
\begin{align*}
\limsup_{s\uparrow 1}\I_{s,p}(t_s u) &= \limsup_{s\uparrow 1}\frac1p \left\{ \int_\Omega f(x,t_s u) t_s u - p F(x,t_s u)\,dx \right\}\\
&=\frac1p \left\{ \int_\Omega f(x,u) u - p F(x,u)\,dx \right\}\\
&=\I_{1,p}(u)
\end{align*}
and the proof concludes by taking infimum over $u\in \Np$.
\end{proof}

Our next lemma proves that any sequence of ground state solutions to \eqref{eq.s.sch} is uniformly bounded with respect to $s$ away from zero and infinity.

\begin{lema} \label{lema.acotado}
Let $u_s\in W^{s,p}_0(\Omega)$ be a  ground state   solution of \eqref{eq.s.sch} with $s\in (0,1)$. Then there exist two constants $0<c<C<\infty$ independent on $s$ such that
$$
c\le \|u_s\|_s\le C.
$$
\end{lema}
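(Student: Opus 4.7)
The plan is to exploit the Nehari identity $\|u_s\|_s^p = \int_\Omega f(x,u_s) u_s \,dx$ (which holds because $u_s \in \Nsp$) and to attack the upper and lower bounds separately.

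For the upper bound, the classical Ambrosetti--Rabinowitz trick will do the job. Subtracting $\frac{1}{\mu}\langle \I_{s,p}'(u_s), u_s\rangle = 0$ from $\I_{s,p}(u_s) = c_s$ yields
$$
c_s = \Big(\frac{1}{p}-\frac{1}{\mu}\Big)\|u_s\|_s^p + \int_\Omega \Big(\frac{1}{\mu} f(x,u_s) u_s - F(x,u_s)\Big)\, dx.
$$
By $(f_6)$ the integrand is nonnegative, so $\big(\frac{1}{p}-\frac{1}{\mu}\big)\|u_s\|_s^p \le c_s$. Since $\mu > p$ and Lemma \ref{cota.limsup} bounds $c_s$ from above for $s$ close to $1$, the upper bound $\|u_s\|_s \le C$ follows at once.

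For the lower bound, I first combine $(f_2)$ and $(f_4)$ to obtain, for every $\ve>0$, an exponent $q \in (p, p^*)$ and a constant $C_\ve > 0$ with $|f(x,z)z| \le \ve|z|^p + C_\ve |z|^q$. Plugging into the Nehari identity gives
$$
\|u_s\|_s^p \le \ve \|u_s\|_p^p + C_\ve \|u_s\|_q^q.
$$
Next I invoke fractional Poincar\'e and Sobolev inequalities with constants uniform in $s$ near $1$, namely $\|u_s\|_p \le C_1\|u_s\|_s$ and $\|u_s\|_q \le C_2\|u_s\|_s$. Choosing $\ve$ so small that $\ve C_1^p < \tfrac12$ absorbs the first term on the right and leads to $\tfrac12\|u_s\|_s^p \le C_\ve C_2^q \|u_s\|_s^q$; since $u_s \neq 0$ and $q > p$, this forces $\|u_s\|_s \ge c > 0$ uniformly in $s$.

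The main obstacle is establishing the uniform-in-$s$ Sobolev and Poincar\'e constants used above, since Theorem \ref{inmersion} only states the embedding for each fixed $s$. I would argue this by contradiction: if the best constant along some $s_k \uparrow 1$ diverged, one could normalize to produce a sequence $v_k \in W^{s_k,p}_0(\Omega)$ with $\|v_k\|_{s_k} = 1$ but $\|v_k\|_q \to \infty$; the BBM compactness encoded in Theorem \ref{thm.BBM} then yields a subsequential limit $v \in W^{1,p}_0(\Omega)$ with $\|\nabla v\|_p \le 1$ and $\|v\|_q = \infty$, violating the classical Sobolev embedding. With this uniformity in hand, both bounds follow as outlined.
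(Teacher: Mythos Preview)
Your argument is essentially identical to the paper's: the upper bound via the Ambrosetti--Rabinowitz subtraction combined with Lemma~\ref{cota.limsup}, and the lower bound via the Nehari identity, estimate~\eqref{F5}, and the Sobolev embedding. The paper in fact simply cites Theorem~\ref{inmersion} at the last step without discussing uniformity in~$s$, so you are being more scrupulous in flagging that point.

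That said, your contradiction sketch for the uniform embedding constant does not close as written. From $\|v_k\|_{s_k}=1$ and the BBM compactness you get $v_k\to v$ in $L^p(\Omega)$ and $v\in W^{1,p}_0(\Omega)$, but $\|v_k\|_q\to\infty$ together with $L^p$-convergence does \emph{not} force $\|v\|_q=\infty$; the $L^q$-mass could concentrate and disappear in the limit. A clean fix is to interpose a fixed level: choose $s_0\in(0,1)$ with $q<p^*_{s_0}$ and use the elementary bound $[w]_{s_0,p}\le C(s_0)\,[w]_{s,p}$ for $s\ge s_0$ (valid since $\Omega$ is bounded). Then $\{v_k\}$ is bounded in the fixed space $W^{s_0,p}_0(\Omega)$, hence precompact in $L^q(\Omega)$ by the ordinary compact embedding, and the contradiction goes through.
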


\begin{proof}
Let $u_s\in W^{s,p}_0(\Omega)$ be a ground state solution to \eqref{eq.s.sch}.

By Lemma \ref{cota.limsup}, there exists a constant $C>0$ independent of $s\in (0,1)$ such that
$$
\I_{s,p}(u_s) \le C.
$$
But, since $u_s\in \Np$, it follows that
\begin{align*}
C &\ge \I_{s,p}(u_s) - \frac{1}{\mu}\langle \I'_{s,p}(u_s), u_s\rangle \\
& = \left(\frac{1}{p} - \frac{1}{\mu}\right) \|u_s\|_s^p - \int_\Omega \left(F(x, u_s) - \frac{1}{\mu} f(x,u_s)u_s\right)\, dx\\
&\ge  \left(\frac{1}{p} - \frac{1}{\mu}\right) \|u_s\|_s^p,
\end{align*}
where we have used ($f_6$) in the last inequality. 

Hence, $\sup_{s\in (0,1)} \|u_s\|_s <\infty$.

For the lower bound, we simply observe that since $u_s\in \Nsp$, we have, by \eqref{F5},
$$
\|u_s\|_s^p = \int_\Omega f(x,u_s)u_s\, dx \le \ve \|u_s\|_p^p + C_\ve\|u_s\|_q^q.
$$
By Theorem \ref{inmersion}, it follows that
$$
\|u_s\|_s^p \le C\ve\|u_s\|_s^p  + C C_\ve \|u_s\|_s^q. 
$$
From this inequality the lower bound follows easily.
\end{proof}

We are now in position to prove the main result of the section.

\begin{thm}\label{thm.ground}
Let $s\in (0,1)$ and let $u_s\in W^{s,p}_0(\Omega)$ be a ground state solution of \eqref{eq.s.sch}. Then, any accumulation point $u$ of $\{u_s\}_s$ in the $L^p(\Omega)-$topology verifies that $u\in W^{1,p}_0(\Omega)$ and is a ground state solution of  \eqref{eq.1.sch}.
\end{thm}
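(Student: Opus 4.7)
The plan is as follows. Fix a sequence $s_k\uparrow 1$ with $u_{s_k}\to u$ in $L^p(\Omega)$. By Lemma \ref{lema.acotado} the seminorms $[u_{s_k}]_{s_k,p}$ are uniformly bounded, so the compactness/liminf results of \cite{BBM} yield $u\in W^{1,p}_0(\Omega)$; after a further subsequence $u_{s_k}\to u$ a.e.\ in $\Omega$ and, by the subcritical Sobolev control (uniform for $s_k$ close to $1$) combined with interpolation with $L^p$, strongly in $L^q(\Omega)$ for every $1\le q<p^*$.

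Next, I would adapt the argument of Theorem \ref{main} to accommodate the potential term. Since $V\in L^r(\Omega)$ with $r>n/p$, H\"older combined with the strong $L^q$-convergence just mentioned gives
\[
\int_\Omega V(x)|u_{s_k}|^{p-2}u_{s_k}\,v\,dx \to \int_\Omega V(x)|u|^{p-2}u\,v\,dx,\qquad \int_\Omega f(x,u_{s_k})v\,dx\to\int_\Omega f(x,u)v\,dx,
\]
for every $v\in C^\infty_c(\Omega)$, so the potential and nonlinearity act as continuous lower-order perturbations. Invoking the monotonicity of $(-\Delta_p)^{s_k}$ and Lemma \ref{key.lema} exactly as in Theorem \ref{main} then identifies $u$ as a weak solution of \eqref{eq.1.sch}.

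The delicate point is to rule out $u\equiv 0$, which would prevent $u\in\Np$. Here I exploit the lower bound in Lemma \ref{lema.acotado}: since $u_{s_k}\in\mathcal{N}_{s_k,p}$, using \eqref{F5},
\[
c^p\le\|u_{s_k}\|_{s_k}^p = \int_\Omega f(x,u_{s_k})u_{s_k}\,dx\le\varepsilon\|u_{s_k}\|_p^p+C_\varepsilon\|u_{s_k}\|_q^q.
\]
Choosing $\varepsilon$ small relative to the uniform upper bound on $\|u_{s_k}\|_p$ (from Lemma \ref{lema.acotado} and Sobolev), one extracts $\|u_{s_k}\|_q\ge\delta>0$ uniformly in $k$; strong $L^q$-convergence then propagates to $\|u\|_q\ge\delta$, so $u\not\equiv 0$. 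Consequently $u\in\Np$ and $\I_{1,p}(u)\ge c_1$.

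Finally, I would close the inequality $\I_{1,p}(u)\le c_1$ via a $\Gamma$-liminf argument. Theorem \ref{thm.BBM} gives $\Jp(u)\le\liminf_k\Jk(u_{s_k})$; the strong $L^q$-convergence and the growth bound \eqref{F5} (applied to both $F$ and the potential term via the integrability of $V$) yield
\[
\int_\Omega V|u|^p\,dx = \lim_k\int_\Omega V|u_{s_k}|^p\,dx,\qquad \int_\Omega F(x,u)\,dx = \lim_k\int_\Omega F(x,u_{s_k})\,dx.
\]
Summing and using Lemma \ref{cota.limsup},
\[
\I_{1,p}(u)\le\liminf_k\I_{s_k,p}(u_{s_k}) = \liminf_k c_{s_k} \le \limsup_{s\uparrow 1}c_s \le c_1,
\]
so $\I_{1,p}(u)=c_1$ and $u$ is a ground state of \eqref{eq.1.sch}. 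I expect the main obstacles to be (i) the nontriviality of $u$, handled by the Nehari-based lower bound above, and (ii) securing sufficiently strong subcritical compactness uniformly in $s_k$ to pass both the potential and nonlinearity terms to the limit.
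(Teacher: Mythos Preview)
Your proof is correct and follows the same overall strategy as the paper: use Lemma \ref{lema.acotado} to get uniform bounds and strong $L^q$-convergence, invoke the proof of Theorem \ref{main} (with the potential treated as a compact lower-order perturbation) to show $u$ solves \eqref{eq.1.sch}, use the Nehari lower bound to exclude $u=0$, and then squeeze $\I_{1,p}(u)$ against $c_1$ via Lemma \ref{cota.limsup}.

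The one genuine difference lies in how the final energy inequality $\I_{1,p}(u)\le c_1$ is obtained. You argue directly from the $\Gamma$-liminf inequality of Theorem \ref{thm.BBM}, namely $\Jp(u)\le\liminf_k\Jk(u_{s_k})$, together with convergence of the lower-order integrals, to get $\I_{1,p}(u)\le\liminf_k c_{s_k}$. The paper instead exploits the Nehari identity to cancel the gradient part entirely: since $u_{s_k}\in\mathcal{N}_{s_k,p}$ and $u\in\Np$, one has
\[
c_{s_k}=\I_{s_k,p}(u_{s_k})-\tfrac{1}{p}\langle\I_{s_k,p}'(u_{s_k}),u_{s_k}\rangle=\tfrac{1}{p}\int_\Omega\bigl(f(x,u_{s_k})u_{s_k}-pF(x,u_{s_k})\bigr)\,dx,
\]
and the right-hand side converges exactly to the analogous expression for $u$, yielding $\lim_k c_{s_k}=\I_{1,p}(u)$ as an equality rather than merely an inequality. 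Your route is arguably the more natural one from the $\Gamma$-convergence viewpoint; the paper's route buys an exact identification of the limit of $c_{s_k}$ without ever invoking the liminf inequality on the seminorms. Both close the argument once combined with Lemma \ref{cota.limsup} and $u\in\Np$.
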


\begin{proof} 

Let $s_k\uparrow 1$ be a sequence such that $u_{s_k}\to u$ in $L^p(\Omega)$.

From Lemma \ref{lema.acotado} we obtain that $\sup_{k\in\N} [u_{s_k}]_{s_k,p}<\infty$ and hence, by \cite{BBM}, $u \in W^{1,p}_0(\Omega)$ and $u_{s_k}\to u$ strongly in $L^r(\Omega)$ for any $1\le r<p^*$.

We claim that $u\neq 0$. Indeed, since $u_{s_k}\in \mathcal{N}_{s_k, p}$, we have that 
$$
0<c\le \|u_{s_k}\|_{s_k}^p = \int_\Omega f(x,u_{s_k}) u_{s_k}\,dx \to \int_\Omega f(x, u) u\, dx.
$$

Let us see now that $u$ is a ground state solution of \eqref{eq.1.sch}. Since $u_{s_k}\in \mathcal{N}_{s_k,p}$ and since, by Theorem \ref{main}, $u$ is a weak solution to \eqref{eq.1.sch}, we obtain that
\begin{align*}
\liminf_{k\to\infty} c_{s_k}&= \liminf_{k\to\infty} \I_{s_k,p}(u_{s_k}) = \liminf_{k\to\infty} ( \I_{s_k,p}(u_{s_k}) - \tfrac1p\langle \I_{s,p}' u_{s_k},u_{s_k} \rangle)\\
& = \frac{1}{p}\liminf_{k\to\infty} \left(  \int_\Omega f(x,u_{s_k})u_{s_k} -  p\int_\Omega F(x,u_{s_k})\,dx  \right)\\
&=\frac1p  \left(  \int_\Omega f(x,u)u -  p\int_\Omega F(x,u)\,dx  \right) = \I_{1,p}(u)\geq c_1.
\end{align*}
Therefore, as a consequence of Lemma \ref{cota.limsup} and the inequality above we get
$$
\liminf_{k\to\infty} c_{s_k} \geq c_1 \geq \limsup_{s\uparrow 1} c_{s} \geq \limsup_{k\to\infty} c_{s_k},
$$
resulting in $\lim_{k\to\infty} c_{s_k}=c_1 = \I_{1,p}(u)$.
\end{proof}

\subsection{The unbounded domain case}

In this subsection we consider $\Omega\subset\R^n$ to be a general unbounded continuous open and connected set (for instance, $\Omega=\R^n$).

There are several cases where the existence of a ground state for problems \eqref{eq.s.sch} and \eqref{eq.1.sch} is verified. As an example of those cases, in this subsection we consider the case where the source term $f(x,z)$ in addition to ($f_1$)--($f_6$) also verifies
\begin{enumerate}
\item[($f_7$)] $|f(x,z)|\le \omega(x) |z|^{q-2}z$ where $p<q<p^*$ and $\omega\in L^\infty(\Omega)\cap L^{p_1}(\Omega)$, where $p_s=np/(np-q(n-sp))$ for $0<s\le 1$.
\end{enumerate}

\begin{rem}
Observe that for $0<s<1$, one has that $p_1<p_s$. Hence, by interpolation, $\omega\in L^{p_s}(\Omega)$ for every $0<s\le 1$.
\end{rem}

It is proven in \cite{Yu} that under ($f_1$)--($f_7$), there exists a ground state solution to \eqref{eq.1.sch}. Moreover, the exact same arguments (with the obvious modifications) apply to problem \eqref{eq.s.sch} to show the existence of ground state solutions in the fractional case.

In order to apply our results we also need to impose some stronger assumptions on the potential function $V$, namely
\begin{enumerate}
\item[($V_2$)] $V\in L^\infty(\Omega)$ and there exists $\alpha_0>0$ such that $V(x)\ge \alpha_0$ a.e. $x\in\Omega$.
\end{enumerate}

An immediate consequence of ($V_2$) is that the norm $\|\cdot\|_s$ controls the Sobolev norm $\|\cdot\|_{s,p}$, i.e.
\begin{equation}\label{equiv.norm}
\|u\|_{s,p}^p \le \max\left\{1, \frac{1}{\alpha_0}\right\} \|u\|_s^p.
\end{equation}

Under these hypotheses, we get the following result.
\begin{thm}
Assume the same hypotheses of Theorem \ref{thm.ground} and moreover that $\Omega$ is unbounded and {\em ($f_7$), ($V_2$)} hold. Then, the same conclusions of Theorem \ref{thm.ground} hold true.
\end{thm}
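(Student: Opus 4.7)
The plan is to mirror the proof of Theorem \ref{thm.ground} step by step, replacing each appeal to the compact Sobolev embedding (which fails on unbounded $\Omega$) by a Vitali-type argument grounded on the weight condition $\omega\in L^{p_1}(\Omega)$ in $(f_7)$. First I would rewrite the two preparatory lemmas. For the limsup bound on the mountain pass levels (the analogue of Lemma \ref{cota.limsup}), fix $u\in \Np$; under $(V_2)$ and \eqref{equiv.norm}, $u\in W^{1,p}_0(\Omega)\hookrightarrow L^{p^*}(\Omega)$, and $(f_7)$ gives $|f(x,tu)tu|\le t^q\omega(x)|u|^q$, which lies in $L^1(\Omega)$ by H\"older with exponents $p_1$ and $p^*/q$ (note $1/p_1+q/p^*=1$). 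This integrable majorant lets every dominated-convergence step in the bounded case go through verbatim, yielding $c_1\ge\limsup_{s\uparrow 1} c_s$.

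Next I would establish the analogue of Lemma \ref{lema.acotado}. The upper bound $\|u_s\|_s\le C$ uses only $(f_6)$ and the mountain pass characterization, so nothing changes. For the lower bound, from the Nehari identity and $(f_7)$,
$$
\|u_s\|_s^p=\int_\Omega f(x,u_s)u_s\,dx\le \|\omega\|_{p_1}\|u_s\|_{p^*}^q,
$$
and then \eqref{equiv.norm} together with Theorem \ref{inmersion} (applied on $\R^n$ via the extension by zero) gives $\|u_s\|_{p^*}\le C\|u_s\|_{s,p}\le C'\|u_s\|_s$; since $q>p$ this forces $\|u_s\|_s\ge c>0$ uniformly in $s$.

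With these uniform bounds in hand, assume $u_{s_k}\to u$ in $L^p(\Omega)$. By \cite{BBM} we get $u\in W^{1,p}_0(\Omega)$, and (via $(V_2)$ and Sobolev) $\sup_k\|u_{s_k}\|_{p^*}<\infty$, so interpolation between $L^p(\Omega)$ and $L^{p^*}(\Omega)$ yields $u_{s_k}\to u$ strongly in $L^r(\Omega)$ for every $p\le r<p^*$. To show $u$ weakly solves \eqref{eq.1.sch}, I would invoke Theorem \ref{main} in the form of Remark \ref{remark.clave}, absorbing $V(x)|u|^{p-2}u$ into an $s$-independent source (allowed by $V\in L^\infty$, so the combined nonlinearity still satisfies $(f_1)$, $(f_2)$); the Schr\"odinger extra term passes to the limit because $V$ is bounded and $u_{s_k}\to u$ in $L^p$.

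The main obstacle, and the only genuinely new step, is the convergence of the nonlinear integrals
$$
\int_\Omega f(x,u_{s_k})u_{s_k}\,dx\to\int_\Omega f(x,u)u\,dx,\qquad \int_\Omega F(x,u_{s_k})\,dx\to \int_\Omega F(x,u)\,dx,
$$
needed both to show $u\not\equiv 0$ (via the Nehari identity and the uniform lower bound on $\|u_{s_k}\|_{s_k}$) and to conclude $\I_{1,p}(u)\le c_1$ as in the bounded case. I would establish these by Vitali's convergence theorem: pointwise convergence holds after extracting an a.e.\ subsequence; for equi-integrability and tightness, $(f_7)$ gives $|f(x,u_{s_k})u_{s_k}|\le \omega(x)|u_{s_k}|^q$ and H\"older yields, for every measurable $E\subset\Omega$,
$$
\int_E \omega|u_{s_k}|^q\,dx\le \Bigl(\int_E \omega^{p_1}\Bigr)^{1/p_1}\|u_{s_k}\|_{p^*}^q,
$$
so the uniform integrability of $\omega^{p_1}\in L^1(\Omega)$ and its decay on $\{|x|>R\}$ supply both the equi-integrability and the tightness, uniformly in $k$. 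The same argument applies to $F(x,u_{s_k})$ since $|F(x,z)|\le q^{-1}\omega(x)|z|^q$. Once these convergences are in place, the final chain of inequalities $\liminf c_{s_k}\ge c_1\ge \limsup c_s\ge \limsup c_{s_k}$ closes the argument exactly as in Theorem \ref{thm.ground}.
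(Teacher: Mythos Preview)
Your proposal is correct in spirit and supplies exactly the details the paper omits: the paper's own proof is a single sentence asserting that, thanks to \eqref{equiv.norm}, ``all of the arguments in the proof of Theorem \ref{thm.ground} carry over to this case without modifications.'' You correctly identify that the loss of the compact embedding (Theorem \ref{inmersion} needs finite measure) is the only genuine obstacle, and you use $(f_7)$ and $(V_2)$ to repair each step. So the approach matches the paper's intent; you are simply writing out what the authors leave implicit.

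One technical imprecision recurs throughout and should be fixed. The fractional Sobolev inequality on $\R^n$ gives $\|u\|_{p^*_s}\le C[u]_{s,p}$ with $p^*_s=\tfrac{np}{n-sp}<p^*$, not an $L^{p^*}$ bound; in particular your inequalities $\|u_s\|_{p^*}\le C\|u_s\|_{s,p}$ and $\sup_k\|u_{s_k}\|_{p^*}<\infty$ do not follow from the $W^{s,p}$ embedding. The fix is to run H\"older with the $s$-dependent pair $(p_s, p^*_s/q)$ rather than $(p_1,p^*/q)$: the Remark after $(f_7)$ gives $\omega\in L^{p_s}(\Omega)$ for every $s\in(0,1]$, and since $p_s\downarrow p_1$ with $\omega\in L^{p_1}\cap L^\infty$, the norms $\|\omega\|_{p_s}$ are uniformly bounded and $\{\omega^{p_s}\}$ is uniformly integrable and tight. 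With this adjustment your lower bound in the analogue of Lemma \ref{lema.acotado} and your Vitali argument both go through. Alternatively, once you have established $u_{s_k}\to u$ in $L^r(\Omega)$ for $p\le r<p^*$ by interpolation (which is correct as written), the convergence of $\int_\Omega f(x,u_{s_k})u_{s_k}\,dx$ and $\int_\Omega F(x,u_{s_k})\,dx$ follows directly from the generalized dominated convergence theorem with the convergent majorants $\|\omega\|_\infty|u_{s_k}|^q\to\|\omega\|_\infty|u|^q$ in $L^1$, bypassing the weighted Vitali step entirely.
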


\begin{proof}
Just observe that since ($V_2$) implies \eqref{equiv.norm}, all of the arguments in the proof of Theorem \ref{thm.ground} carry over to this case  without modifications.
\end{proof}

\section{Stability of eigenvalues}

In this section we consider the eigenvalue problem for the fractional $p-$Laplacian,
\begin{equation}\label{eq.s.eigen}
\begin{cases}
(-\Delta_p)^s u = \lambda^s |u|^{p-2}u & \text{ in }\Omega\\
u=0 & \text{ in } \R^n\setminus \Omega,
\end{cases}
\end{equation}
and its local counterpart
\begin{equation}\label{eq.1.eigen}
\begin{cases}
-\Delta_p u = \lambda^1 |u|^{p-2}u & \text{ in }\Omega\\
u=0 & \text{ on } \partial\Omega.
\end{cases}
\end{equation}

We will consider the bounded domain case.

For these problems it is known that there exists a sequence of {\em variational} eigenvalues $\{\lambda_k^s\}_{k\in\N}$ for each $s\in (0,1]$ given by the min-max formulation
\begin{equation}\label{lambdaks}
\lambda_k^s := \inf_{C\in {\mathcal C}_k^s} \max_{v\in C} \frac{[v]_{s,p}^p}{\|v\|_p^p},
\end{equation}
where  ${\mathcal C}_k^s$ denotes the compact, symmetric subsets of $W^{s,p}_0(\Omega)$ such that $\gamma(C)\ge k$, and $\gamma$ is the Krasnoselskii genus. See \cite{GAP} for $s=1$ and \cite{LL} in the fractional case $s\in (0,1)$. Of course in \eqref{lambdaks}, $[u]_{1,p}^p = \|\nabla u\|_p^p$.

This sequence of eigenvalues is denoted by $\Sigma_{\rm var}^s$. On the other hand, the spectrum of \eqref{eq.s.eigen} and \eqref{eq.1.eigen} is denoted by $\Sigma^s$, for $s\in (0,1]$.

Of course, $\Sigma_{\rm var}^s\subset \Sigma^s$ and a major open problem is to determine if equality holds.

The stability of the variational spectrum $\Sigma_{\rm var}^s$ as $s\uparrow 1$ was studied in \cite{BPS} and in that paper, the authors prove that $\lambda_k^s\to \lambda_k$ as $s\uparrow 1$ together with the convergence of the corresponding eigenfunctions. See \cite[Theorem 1.2]{BPS}.

For more stability results of different fractional eigenvalues problems, we refer the interested reader to \cite{FBSS}.

As an application of Theorem \ref{main} (more precisely, of Remark  \ref{remark.clave}) we obtain an stability result for eigenvalues of \eqref{eq.s.eigen} that gives much less information when applied to the variational sequence $\Sigma_{\rm var}^s$ but it can be applied to {\em any sequence of eigenvalues} $\lambda^s\in \Sigma^s$. Our result reads as follows:
\begin{thm}
Let $\lambda^s\in \Sigma^s$ be such that $\sup_{s\in (0,1)}\lambda^s <\infty$. Then any accumulation point $\lambda$ of the set $\{\lambda^s\}_{s\in(0,1)}$ belongs to $\Sigma^1$. Moreover, if $\{s_k\}_{k\in\N}$ is such that $s_k\to 1$ and $\lambda^{s_k}\to \lambda$ and $u_k\in W^{s_k,p}_0(\Omega)$ is an $L^p(\Omega)-$normalized eigenfunction of \eqref{eq.s.eigen} associated to $\lambda^{s_k}$, then, up to a further subsequence, there exists $u\in W^{1,p}_0(\Omega)$ such that $u_k\to u$ strongly in $L^p(\Omega)$ and $u$ is an $L^p(\Omega)-$normalized eigenfunction of \eqref{eq.1.eigen} associated to $\lambda$.
\end{thm}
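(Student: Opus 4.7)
The plan is to view \eqref{eq.s.eigen} as a particular instance of \eqref{eq.s} with right-hand side $f_{s_k}(x,z)=\lambda^{s_k}|z|^{p-2}z$, and then combine Remark \ref{remark.clave} with the Bourgain--Brezis--Mironescu compactness result from \cite{BBM}. The first ingredient I need is a uniform control of the Gagliardo seminorms $[u_k]_{s_k,p}$. Since $\Jsp$ is $p$-homogeneous, Euler's identity gives $\langle (-\Delta_p)^{s_k} u_k, u_k\rangle = [u_k]_{s_k,p}^p$. Testing the weak formulation of \eqref{eq.s.eigen} against $u_k$ itself and using $\|u_k\|_p=1$ therefore yields $[u_k]_{s_k,p}^p = \lambda^{s_k}$, which is bounded uniformly in $k$ by hypothesis.

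With this uniform bound in hand, I would appeal to the compactness companion of Theorem \ref{thm.BBM} (also contained in \cite{BBM}): since $\Omega$ is bounded, the family $\{u_k\}_{k\in\N}$ is precompact in $L^p(\Omega)$ and any limit point lies in $W^{1,p}_0(\Omega)$. Up to a further subsequence, I may then assume $u_k\to u$ strongly in $L^p(\Omega)$ and a.e.\ in $\Omega$, with $u\in W^{1,p}_0(\Omega)$. Strong $L^p$-convergence combined with $\|u_k\|_p=1$ forces $\|u\|_p=1$, so in particular $u\not\equiv 0$, which is where the normalization of the eigenfunctions becomes essential.

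To identify the limit as an eigenfunction I observe that, since $\lambda^{s_k}\to\lambda$, the nonlinearities $f_{s_k}(x,z)=\lambda^{s_k}|z|^{p-2}z$ converge to $f(x,z)=\lambda|z|^{p-2}z$ uniformly on compact sets of $z\in\R$, and $f$ satisfies ($f_1$)--($f_2$) with any $q\in [p,p^*)$. Hence Remark \ref{remark.clave} applies and delivers that $u$ is a weak solution of $-\Delta_p u=\lambda|u|^{p-2}u$ in $\Omega$ with $u=0$ on $\partial\Omega$. Combined with $u\not\equiv 0$, this shows simultaneously that $\lambda\in\Sigma^1$ and that $u$ is an $L^p$-normalized eigenfunction associated to $\lambda$, which is exactly both assertions of the theorem. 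I expect the only delicate point to be invoking the correct BBM-type compactness for sequences with varying fractional exponent $s_k\to 1$, rather than just the $\Gamma$-convergence statement of Theorem \ref{thm.BBM}; once that compactness is granted, the remainder of the argument is essentially formal.
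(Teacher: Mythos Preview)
Your proposal is correct and follows essentially the same route as the paper: test the eigenvalue equation against $u_k$ to bound $[u_k]_{s_k,p}^p$ by $\lambda^{s_k}$, invoke the BBM compactness to extract an $L^p$-limit $u\in W^{1,p}_0(\Omega)$ with $\|u\|_p=1$, and then apply Remark~\ref{remark.clave} with $f_{s_k}(z)=\lambda^{s_k}|z|^{p-2}z\to \lambda|z|^{p-2}z$ uniformly on compacts. Your closing caveat about needing the BBM \emph{compactness} (not just $\Gamma$-convergence) for varying $s_k\uparrow 1$ is exactly the point the paper also takes for granted by citing \cite{BBM}.
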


\begin{proof}
The proof is an immediate consequence of Remark \ref{remark.clave}. In fact, assume that $\lambda^k := \lambda^{s_k}\to \lambda$ and let $u_k$ be the associated $L^p(\Omega)-$normalized eigenfunction of \eqref{eq.s.eigen}. Then, from \eqref{eq.s.eigen} one gets that
$$
[u_k]_{s_k,p}^p = \lambda^k \|u_k\|_p^p = \lambda^k\le C,
$$
with $C$ independent on $k\in\N$. Therefore, by \cite{BBM}, there exists $u\in W^{1,p}_0(\Omega)$ such that $u_k\to u$ strongly in $L^p(\Omega)$. Observe that this implies that $\|u\|_p=1$, so in particular, $u\neq 0$.

Now, since $f_k(z) := \lambda^k |z|^{p-2}z\to f(z) := \lambda |z|^{p-2}z$ uniformly on compact sets of $z\in\R$, from Remark \ref{remark.clave} it follows that $u$ is an eigenfunction of \eqref{eq.1.eigen} associated to $\lambda$ as we wanted to show.
\end{proof}

\section*{Acknowledgements}

This paper is partially supported by grants UBACyT 20020130100283BA, CONICET PIP 11220150100032CO and ANPCyT PICT 2012-0153. 

The authors are members of CONICET.

\bibliographystyle{amsplain}
\bibliography{biblio}

\end{document}